\definecolor {processblue}{cmyk}{0,0,0,0.17}
\definecolor{light-gray}{gray}{0.85}
\newtheorem{thm}{Theorem}
\newtheorem{lem}[thm]{Lemma}
\newtheorem{remark}{Remark}[thm]
\newcommand{\bzero}{{\boldsymbol 0}}
\begin{document}
\title{A Stochastic Sizing Approach for Sharing-based Energy Storage Applications}

\author{Islam~Safak~Bayram,~\IEEEmembership{Member,~IEEE,}~Mohamed~Abdallah,~\IEEEmembership{Senior~Member,~IEEE,}
~Ali~Tajer,~\IEEEmembership{Senior~Member,~IEEE,}~and
~Khalid~Qaraqe,~\IEEEmembership{Senior~Member,~IEEE}
\vspace{-25 pt}
\thanks{Islam Safak Bayram is with Qatar Environment and Energy Research Institute and College of Science and Technology, Hamad Bin Khalifa University, Education City, Doha, Qatar. Email: ibayram@qf.org.qa.}

 \thanks{Mohamed Abdallah is with the Department of Electrical and Computer Engineering, Texas A\&M University at Qatar, PO Box 23874, Education City, Doha, Qatar and the Department of Electronics and Communications Engineering, Cairo University, Giza, PO Box 12613, Egypt. Email: mohamed.abdallah@qatar.tamu.edu}
\thanks{Ali Tajer is with the Department of Electrical, Computer, and Systems Engineering, Rensselaer Polytechnic Institute, USA, Email: tajer@ecse.rpi.edu.}
 \thanks{Khalid Qaraqe is with the Department
of Electrical and Computer Engineering, Texas A\&M University at Qatar, PO Box 23874, Education City, Doha, Qatar. Email: khalid.qaraqe@qatar.tamu.edu.}

}

\maketitle


\begin{abstract}
In order to foster renewable energy integration, improve power quality and reliability, and reduce hydrocarbon emissions, there is a strong need to deploy energy storage systems (ESSs), which can provide a control medium for peak hour utility operations. ESSs are especially desirable at the residential level, as this sector has the most untapped demand response potential. However, considering their high acquisition, operation, and maintenance costs, isolated deployment of ESSs is not economically viable. Hence, this paper proposes a \emph{sharing-based} ESS architecture, in which the demand of each customer is modeled stochastically and the aggregate demand is accommodated by a combination of power drawn from the grid and the storage unit when the demand exceeds grid capacity. The optimal size of energy storage systems is analyzed an analytical method is developed for a group customers with \emph{single} type of appliances. This framework is also extended to any network size with arbitrary number of customers and appliance types, where the analytical method provides a tractable solution to the ESS sizing problem. Finally, a detailed cost-benefit analysis is provided, where the results indicate that sharing-based ESSs are practical and yield significant savings in terms of ESS size.
\end{abstract}

\IEEEpeerreviewmaketitle
\section{Introduction}

Over the past few years, the power grids have become more stressed due to the steady increase in peak demand and more fragile due to the integration of intermittent renewable energy resources. Such a negative trajectory poses a profound threat to power grids, leads to a rise in hydrocarbon emissions, and exposes new economic challenges. One effective solution to mitigate the aforementioned issues, is to deploy energy storage systems (ESSs), which can act as an energy buffer and decouple the time of generation and demand by storing cheaper and cleaner off-peak hour electricity and delivering it during the peak load periods \cite{EPRI}. Integration of such storage units has multi-faceted monetary benefits for different players. For instance, end-users can enjoy the reduced electricity cost and lessened interruptions, while utility operators can enjoy improved grid reliability and asset utilization, and possibly deferral of system upgrades. Moreover, policymakers benefit from reduced hydrocarbon emissions and the mainstream integration of renewables, which are closely linked to energy security \cite{EPRI} and \cite{sandiaReport}. 

While employing storage devices at microgrids has certain benefits, their deployment based on the current technologies may still not be economically viable \cite{sandiaReport}. Therefore, optimal sizing of the storage units based on the realistic needs of the grids is a critical step towards the efficient operation of the grid. Specifically, over-provisioning ESS size entails costly and underutilized assets, whereas under-provisioning reduces its operating lifetime (e.g., frequently exceeding the allowable depth of charge level degrades its health). Hence, there is a strong need to develop analytical models to solve the sizing problem. Furthermore, the sizing requirements for storage units are usually application-dependent. For instance, energy arbitrage applications for independent system operators require 100s mega watts (MW) in size, while storage units employed for energy management for communities or microgrids require 10s kilo watts (kW). In this study, our main focus is on the latter one, as the residential sector has a great potential for peak demand reduction in the U.S. \cite{federal2009national}.

ESS sizing has received some attention in the literature. The work in~\cite{oudalov2007sizing} presents a sizing approach for {\em single} industrial customers for peak saving applications. The sizing problem is solved by maximizing the net benefit, which is the sum of reductions in the electricity bills minus the operation costs and the one-time acquisition cost. Similarly, \cite{sizingNAN} proposes a sizing framework using similar cost models for a microgrid, but it also considers savings due to the storage of energy generated from renewable resources. The work in \cite{solarShaping} develops a sizing approach based on stochastic network calculus to size the storage units coupled with solar power generation. It employs \emph{loss of load probability} as the main performance metric to provision the resources. Another probabilistic approach is presented in \cite{zaragozaSizing}, which couples forecasting errors in wind generation with the storage unit and sizes the storage unit according to a desired level of accuracy. Moreover, \cite{IBMRes} solves the sizing problem by using stochastic dynamic programming to minimize the operation cost and employs the storage device for balancing the load in a wind farm application. From the power engineering point of view, the sizing problem is usually solved via simulations \cite{lukicESS,sizing1,sizing2}. However, simulations techniques are usually computationally expensive, and their accuracy depends upon the availability of data traces.

In this paper, we develop an analytical framework for optimal energy storage sizing for sharing-based end-user applications. The proposed framework contributes to the existing literature in the following ways:

\begin{itemize}
\item The existing analytical methods for storage sizing focus on settings with {\em one} customer, or are developed for renewable integration and do not explore the underlying user dynamics. The proposed framework can cope with any network with arbitrary number of consumers with different levels of demand. Our analytical results show that a sharing-based design of storage units exhibits substantial gains over user-level designs.
\item The existing methods for multi-consumer settings are simulation-based. Lack of closed form expressions for the storage size-cost model limits the applicability of the model in optimizing the operations. The advantage of the proposed analytical method is hat it establishes the exact optimal sizing, and subsequently, is computationally less expensive. 
\item Majority of the existing literature assumes that residential users employ storage units for energy management applications without considering the associated costs. However, the studies conducted by Electric Power Research Institute \cite{EPRI} and Sandia Laboratories \cite{sandiaReport} reveal that the cost of employing stand-alone ESS is very high\footnote{The cost/benefit regime depends on different assumptions such as availability of solar rooftops, energy consumption statistics, project duration, and utility tariffs, to name a few. See references for details}. Hence, we provide a detailed economic analysis for ESS applications in residential usage and show that storage technologies can be economically viable, only if operated in a shared manner.
\end{itemize}

Proper sizing of storage units is ultimately linked to the customer electricity consumption model. Several measurement-based studies \cite{rosenberg1, onOffModels2,Vardakas2014455}, and \cite{Richardson} show that electricity consumption of households can be represented by On-Off models. For instance, the work presented in \cite{rosenberg1} measures the electricity consumption of $20$ households at fine-grained intervals (every $6$ second) and models the loads with continuous-time Markov chains. Driven by this observation, we adopt a Markovian fluid model to represent consumer's demands, and the analyzes rely on the stochastic theory of fluid dynamics. We establish the interplay among the minimum amount of storage size, the grid capacity, number of consumers, and the stochastic guarantees on outage events. 

We note that studying storage units at a network level is of paramount significance as they are expected to become integral to smart energy grids. More specifically, ESSs will be employed at smart residential, business complexes, and university campuses, to name a few, to reduce peak hour consumption. Clearly, in such sharing-based applications, the size of the energy storage is linked to the customer population, the power drawn from the grid, and the load profile. This relation is highly non-linear due to multiplexing gains which are computed by the percentage of reduction in the required amount of resources with respect to the baseline case of assigning peak demand to each user. 

\begin{table}[t]
 \begin{scriptsize}
  \centering
  \caption{Notations}
    \begin{tabular}{p{0.99cm}  p{6.9cm} }
    \toprule
    Parameter & Description \\
    \midrule
    $C_{t}$     & Power drawn from grid at time $t$. \\
    $N$ & Number of users in the single class case.\\
    $\bm{N}$     & Vector of the number of users for $K$ classes. Note that this is not the number of houses since in one house there can be multiple appliances requesting demand. \\
    $R_k$     & Demand of a user of class $k$.\\
    $\lambda_{k}$ & Arrival rate of charge request for class $k$. \\
    $\mu_{k}$ & Mean service rate for the customer demand of class $k$.\\
    $B$  & Size of the energy storage unit.\\
    $S(t)$  & ESS depletion level, $0$$\leq$$S(t)$$\leq$$B$. \\
    $L_{\bm{n}}(t)$ & Aggregated load on the system when $\bm{n}$ users are On. For single class index $i$ is used.\\
     ${F_i}(x)$& Steady state cumulative probability distribution function of ESS charge level. \\
    $\varsigma$&Grid power allocated per source ($C$/$N$).\\
    $\kappa$ & ESS per user ($B$/$N$).\\
    $\omega_{k}$ & \emph{Effective demand} for customer class $k$.\\
    $\zeta$ &Bustiness parameter.\\
    $\eta$ & Round-trip efficiency of the storage unit.\\
    \bottomrule
    \end{tabular}
    
  \label{summary}%
  \end{scriptsize}
\vspace{-15pt}
\end{table}%

\section{System Description}
We consider a community of consumers in which the demands of $N$ users are accommodated by the power grid capacity along with a {\em shared} energy storage system unit, in which capacity fluctuates over time. The grid capacity at time $t$ is denoted by $C_t$ and ESS depletion level is $S(t)$, for $t\in\mathds{R}^+$. Furthermore, the energy storage unit has the following parameters: 
\begin{itemize}
\item \emph{Energy rating} or the size of the storage is denoted by $B$ (kWh).

\item \emph{Power rating} is the rate at which storage can be charged or discharged. The charging rate is assumed to be $P_{c}\leq C_{t}, \forall t$, and discharge rating $P_{d}$ is related to the energy rating $B$ since $B=P_{d}\times\mbox{(desired support duration)}$. Desired support duration is typically equivalent to the length of the peak hour.
\item The \emph{efficiency} of a charge-discharge cycle is modeled by $\eta  \in \left[ {0,1} \right]$ to capture the percentage of stored energy or the fraction of energy that is transferred after losses are extracted. 
\item \emph{Dissipation losses} represent a small percentage of losses that occur due to leakage. For simplicity in notations, dissipation losses are ignored.
\item For economic analysis, we follow the standard assumptions made in \cite{EPRI} and assume that the project will be sited for $15$ years, with $10$\% discount rate and the economic analysis is carried out by computing the net present value of the storage in the first year by using the cost per kW or kWh.
\end{itemize}

Furthermore, since the consumers do not necessarily have identical demands, we consider $K$ customer classes, which are distinguished by the amount of electricity demands,  represented by $\{R_{k}\}$, where $R_{k}$ denotes the energy demand per time unit for customer type $k$. Let $N_k$ denote the number of consumers of type $k$ and the vector $\bm{N}$ represents the number of consumers of each type, that is $\bm{N}=(N_1,\cdots,N_K)$. As established in~\cite{jsac,TSG15,rosenberg1,onOffModels2} and \cite{Richardson} the consumption pattern of each consumer can be well-represented by a two-state ``On/Off'' process. We define the binary variable $s^{ik}_t$ to represent the state of consumer $i$ of type $k$ at time $t$ such that
\begin{equation}
s^{ik}_t=\left\{
\begin{array}{ll}
1 & \mbox{consumer $i$ is On}\\
0 & \mbox{consumer $i$ is Off}
\end{array}\right. \ .
\end{equation}
When a customer is in the ``On'' state, it initiates an energy demand. The duration of demand is modeled statistically, which is adopted to capture the variety types of consumers' demands. Specifically, the duration of type $k$ customer's demand is assumed to be exponentially distributed with parameter $\mu_k$. Furthermore, we assume that the requests, that are transitions from ``Off'' to ``On'', are generated randomly and according to a Poisson process with parameter $\lambda_k$.  Hence,
for each consumer $i$ of type $k$ at any time $t$ we have
\begin{equation}
\mathds{P}(s^{ik}_t\;=\;1)\;=\; \frac{\lambda_k}{\lambda_k+\mu_k}\ .
\end{equation}In order to formalize the dynamics of ESS, we define $L_{ik}(t)$ as the charge request of consumer $i$ of type $k$. Then, there are exactly three cases that define the rate of change in the storage unit: (i) the storage can be in the fully charged state, and the aggregate demand is less than grid power $C_{t}$; (ii) the storage can be completely discharged, and the aggregate demand is more than the grid power $C_{t}$; and (iii) any partially charged state with any customer demand. Therefore, for the rate of change in the storage level of the ESS, we have
{\small
\begin{equation}
\frac{{d{S}(t)}}{{dt}} = \left\{ {\begin{array}{ll}
0 , &\mbox{if}\;{S}(t)=B\;\mbox{\&}\;\\
& \sum\limits_{k=1}^K\sum\limits_{i=1}^{N_{k}} \eta L_{ik}(t)<C_t\vspace{.1 in}\\[-2mm]
0 ,&\mbox{if}\;{S}(t)=0\;\mbox{\&}\;\\ 
&\sum\limits_{k=1}^K\sum\limits_{i=1}^{N_{k}}  \eta L_{ik}(t) >C_t\vspace{.1 in}\\[-2mm]
{ {\eta({C_t-{\sum\limits_k\sum\limits_i L_{ik}}}(t)) }},&\mbox{otherwise}
\end{array}} \right. \ .
\end{equation}
}
Due to stochasticities involved (in consumption and generation), by choosing any storage capacity $B$, only stochastic guarantees can be provided for the reliability of the system and always there exists a chance of outage, which occurs when available resources fall below the aggregate demands by the consumers. By noting that $S(t)$ denotes the energy level that the storage unit needs to feed into the grid to avoid outage, an outage event occurs when the necessary load from the storage unit exceeds the maximum available $B$. Hence, we define $\varepsilon$-outage storage capacity, denoted by $B(\varepsilon)$, as the smallest choice of $B$ corresponding to which the probability of outage does not exceed $\varepsilon\in(0,1)$, i.e.,
\begin{equation}\label{eq:problem}
B(\varepsilon)=\left\{
\begin{array}{ll}
\min & B\vspace{.1 in}\\[-3mm]
{\rm s.t.} & \mathds{P}\big(S_t\geq B\big)\leq \varepsilon
\end{array}\right. \ .
\end{equation}Our goal is to determine the $\varepsilon$-outage storage capacity $B(\varepsilon)$ based on grid capacity $C_t$, the number of users $\bm{N}$, and their associated consumption dynamics. For the simplicity of mathematical expressions, we scale the storage parameters and instead of $B/\eta$ we redefine $B$ as the maximum amount of energy that can be stored, and similarly $P_{d}$ and $P_{c}$ represent the actual power ratings. The notations are summarized in Table \ref{summary}. We start our analysis for deriving $B(\varepsilon)$ for a single class case.

\section{Storage Capacity Analysis For Single Class Customers ($K=1$)}

\subsection{Storage Access Dynamics}
When the grid can serve all the consumers' demands, there will be no consumer served by the storage unit. On the other hand, when the grid capacity falls below the aggregate demand, the consumers access the storage unit. Since the requests of the consumers arrive randomly, the number of consumers accessing the unit also varies randomly.

Since we have $N$ {\em independent} consumers each with a two-state model, by taking into account their underlying arrival and consumption processes, the composite model counting the number of users accessing the storage unit at a given time can be modeled as a {\em continuous}-time birth-death process. Specifically, this process consists of $(N+1)$ states, in which state $n\in\{0,\dots,N\}$ models $n$ consumers being active and accessing the storage unit, i.e.,
\begin{equation}
\mbox{state at time $t$ is $n$ if}\quad \sum_{i=1}^Ns^i_t=n\ ,
\end{equation}and drawing $nR_p$ units of power from the storage unit. As depicted in Fig.\ref{MMPP}, the transition rate from state $n$ to state $n+1$ is $(N-n)\lambda$ and, conversely, the transition from state $n+1$ to state $n$ is $(n+1)\mu$. Hence, for the  associated infinitesimal generator matrix $M$, in which the row elements sum to zero, for $i,n\in\{0,\dots,N\}$ we have

{\small
\begin{equation}\label{matrix}
M[i,n]=\left\{
\begin{array}{ll}
-((N-i)\lambda + i\mu) & n=i \vspace{.1 in}\\[-2.5mm]
i\mu & n=i-1\;\;\&\;\; i> 0\vspace{.1 in}\\[-2.5mm]
(N-i)\lambda & n=i+1\;\;\&\;\; i< N\vspace{.1 in}\\[-2.5mm]
0 & \mbox{otherwise}
\end{array}\right. \ .
\end{equation}
}
By denoting the stationary probabilities of state $n\in\{0,\dots,N\}$ by $\pi_n$ and according defining $\bm{\pi}  = \left[ {{\pi _0},{\pi _1},...,{\pi _N}} \right]$, these stationary probability values satisfy $\bm{\pi} M=\bzero$.

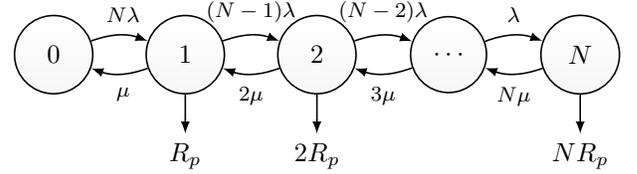
\begin{figure}[t]
\centering
\begin{tikzpicture}[-latex ,auto ,node distance =1.6 cm and 1.75cm ,on grid ,
semithick ,
state/.style ={ circle ,top color =white , bottom color = processblue!20 ,
draw,black , text=black , align=center,text width =0.7 cm, minimum size=10pt}]
\node[state] (C){$0$};
\node[state]       (v1)  [right =of C]   {$1$};
\node[state]       (v2)  [right =of v1]   {$2$};
\node[state]       (v3)  [right =of v2]   {$\cdots$};
\node[state]       (v4)  [right =of v3]   {$N$};
\node[draw=none,below= 1.35cm of v1]  (v1x) {${R_p} $};
\node[draw=none,below= 1.35cm of v2]  (v2x) {${2R_p} $};
\node[draw=none,below= 1.35cm of v4]  (v4x) {${NR_p} $};
\path (C) edge [bend left =20] node[above =0.05 cm, pos=0.5]  {\begin{footnotesize} $N\lambda$\end{footnotesize}  }  (v1);
\path (v1) edge [bend right = -20] node[below =0.05 cm, pos=0.5] {\begin{footnotesize} $\mu$\end{footnotesize}  }(C);
\path (v1) edge [bend left =20] node[above =0.05 cm, pos=0.5] {\begin{footnotesize}$(N-1)\lambda$\end{footnotesize}}  (v2);
\path (v2) edge [bend right = -20] node[below =0.05 cm, pos=0.5] {\begin{footnotesize}$2\mu$\end{footnotesize}}  (v1);
\path (v2) edge [bend left =20] node[above =0.05 cm, pos=0.5] {\begin{footnotesize}$(N-2)\lambda$\end{footnotesize}} (v3);
\path (v3) edge [bend right = -20] node[below =0.05 cm, pos=0.5] {\begin{footnotesize}$3\mu$\end{footnotesize}} (v2);
\path (v3) edge [bend left =20] node[above =0.05 cm, pos=0.5] {\begin{footnotesize}$\lambda$\end{footnotesize}} (v4);
\path (v4) edge [bend right = -20] node[below =0.05 cm, pos=0.5] {\begin{footnotesize}$N\mu$\end{footnotesize}} (v3);
\path    (v1) edge       node[left = 0.05 cm] {} (v1x);
\path    (v2) edge       node[left = 0.05 cm] {} (v2x);
\path    (v4) edge       node[left = 0.05 cm] {} (v4x);

\end{tikzpicture}

\caption{Composite model for $N$ independent users for single user type (k=1). Each user becomes active (``On'') at rate $\lambda$ and becomes inactive (``Off'') at rate $\mu$. The aggregate demand depends on the active number of users. }\label{MMPP}
\vspace{-15pt}
\end{figure}

\subsection{Analyzing Distributions}\label{distribution}

Given the dynamics of accessing the storage unit, in the next step we analyze the statistical behavior of the ESS charge level. Specifically, we define ${F_i}(t,x)$ as the cumulative distribution function (cdf) of the ESS charge level when $i\in\{0,\dots,N\}$ consumers are depleting the storage unit at time $t$, i.e.,
\begin{equation}\label{eq:cdf}
{F_i}(t,x)\;=\; \mathds{P}\Big (S(t) \leq x \;\;\;\mbox{and}\;\;\; \sum_{j=1}^Ns^j_t=i \Big)\ .
\end{equation}
Accordingly, we define the vector of cdfs as
\begin{equation}
\boldsymbol{F}(t,x)\triangleq\left[ {{F_0}(t,x)\; , \; {F_1}(t,x)\; , \;... \; , \;  {F_N}(t,x)} \right]\ .
\end{equation}
Based on this definition, the next lemma delineates a differential equation which admits the cdf vector as its solution and is instrumental for analyzing the probability of outage events, i.e.,  $\mathds{P}\big(\sum_{i=}^NL_i(t)\;>\; C_t+B\big)$.
\begin{lem}
The cdf vector ${\boldsymbol F}(t,x)$ satisfies
\begin{equation}\label{diffEq}
\frac{d{\boldsymbol F}(t,x)}{dx}\cdot D = {\boldsymbol F}(t,x)\cdot M\ ,
\end{equation}
where $D$ is a diagonal matrix defined as
\begin{equation}\label{D}
D\triangleq \mbox{\rm diag}\left[ { - C_t\mu\; , \; (1 - C_t)\mu\; , \; ... \; , (N - C_t)\mu } \right]\ ,
\end{equation}
and matrix $M$ is defined in \eqref{matrix}.
\end{lem}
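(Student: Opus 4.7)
The plan is to derive a forward Kolmogorov equation for the joint process $(S(t), \sum_i s^i_t)$ from first principles, and then specialize to the stationary regime where the time derivative vanishes, leaving only the spatial derivative that appears in the stated ODE.

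First, I would write a Chapman--Kolmogorov relation over a small interval $[t,t+\Delta t]$. Conditional on the chain being in state $j$ throughout this interval and $S$ remaining in the interior $(0,B)$, the dynamics in (3) imply that $S$ has shifted deterministically by $(jR_p-C_t)\Delta t$, so the event $\{S(t+\Delta t)\le x\}$ becomes $\{S(t)\le x-(jR_p-C_t)\Delta t\}$. Combined with $\mathds{P}(j\to i \text{ in } \Delta t)=\delta_{ji}+M[j,i]\Delta t+o(\Delta t)$, this yields
\begin{equation*}
F_i(t+\Delta t,x)=\sum_{j}F_j\bigl(t,\,x-(jR_p-C_t)\Delta t\bigr)\bigl(\delta_{ji}+M[j,i]\Delta t\bigr)+o(\Delta t).
\end{equation*}
Taylor-expanding $F_j$ in its second argument, subtracting $F_i(t,x)$, dividing by $\Delta t$, and letting $\Delta t\downarrow 0$ gives the forward PDE
\begin{equation*}
\frac{\partial F_i}{\partial t}+(iR_p-C_t)\frac{\partial F_i}{\partial x}=\sum_{j}F_j\,M[j,i],
\end{equation*}
i.e., in vector form, $\partial_t\boldsymbol{F}+\partial_x\boldsymbol{F}\cdot \tilde{D}=\boldsymbol{F}\cdot M$ with $\tilde D=\mbox{diag}(iR_p-C_t)$. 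Passing to the stationary regime kills the time derivative, and after the time-scale normalization used in the paper (absorbing $\mu$ into the drift, or equivalently measuring time in units of mean service duration $1/\mu$) $\tilde D$ becomes the diagonal matrix $D$ in (9), giving the claimed $\partial_x\boldsymbol{F}\cdot D=\boldsymbol{F}\cdot M$.

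The routine part is the Chapman--Kolmogorov expansion and the first-order Taylor step. The main obstacle is justifying that the interior ODE is not polluted by the boundary events $\{S(t)=0\}$ and $\{S(t)=B\}$, which carry positive probability mass (the chain has a reflecting wall at $0$ for states with $jR_p<C_t$ and at $B$ for states with $jR_p>C_t$). I would handle these by restricting the above derivation to $x\in(0,B)$ and absorbing the probability masses at the walls into boundary conditions for $\boldsymbol{F}(t,\cdot)$ rather than into the differential equation itself. A secondary subtlety is any state $j$ with exactly $jR_p=C_t$: the corresponding diagonal entry of $D$ vanishes, so the ODE degenerates in that coordinate to a purely algebraic constraint $\boldsymbol{F}\cdot M\mathbf{e}_j=0$, which I would verify is automatically consistent with the balance equations of the birth--death chain.
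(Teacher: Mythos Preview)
Your approach is essentially the same as the paper's: both write a Chapman--Kolmogorov expansion for $F_i(t+\Delta t,x)$ using the birth--death transition rates, take $\Delta t\to 0$ to obtain the forward PDE, and then set $\partial_t\boldsymbol F=0$ to reach the stated ODE. The only cosmetic difference is that the paper spells out the three elementary events (arrival from $i-1$, departure from $i+1$, no change) and applies the deterministic drift shift only to the diagonal term, whereas you write the expansion compactly as $\sum_j F_j(\cdot)(\delta_{ji}+M[j,i]\Delta t)$ and shift every term; to first order in $\Delta t$ these coincide, so nothing is lost. Your remarks on the boundary atoms at $S=0,B$ and on the degenerate coordinate with vanishing drift are refinements the paper does not address, but they do not change the derivation of the interior equation.
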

\begin{proof}

In order to compute the probability density functions, we find the expansion of $F_i(t,x)$ for an incremental change $\Delta t$ in $t$, i.e, ${F_i}(t + \Delta t,x)$. Note that during incremental time $\Delta t$, three elementary events can occur:
\begin{enumerate}
\item one inactive consumer might become active, i.e., $i$ increases to $i+1$;
\item one active consumer becomes inactive,  i.e., $i$ reduces to $i-1$; or
\item the number of active consumers remains unchanged.
\end{enumerate}
Since the durations of arrival and departure of consumers are exponentially distributed as cdf $F_i(t,x)$ can be expanded
\begin{align}\label{generating}
\nonumber {F_i} & (t + \Delta t,x) \\ \nonumber
&= \underbrace{\left[ {N - (i - 1)} \right]\cdot  (\lambda \Delta t)\cdot {F_{i - 1}}(t,x)}_{\text{one consumer added}}\\[-2mm]
\nonumber & + \underbrace{[i + 1]\cdot(\mu \Delta t)\cdot {F_{i + 1}}(t,x)}_{\text{one consumer removed}} \\[-2mm]
\nonumber &+ \underbrace{\left[ {1 - \left( {(N - i)\lambda  + i\mu } \right)\Delta t} \right]\cdot {F_i}(t,x - (i - C_t)\cdot \mu \Delta t)}_{\text{no change}}\\[-2mm]
&  + o\left( {\Delta t^2} \right),
\end{align}
where $ o\left( {\Delta t^2} \right)$ represents the probabilities of the compound events and tends to zero more rapidly than ${\Delta t^2}$ (and $\Delta t$) as $\Delta t \to 0$. Next, by passing the limit
$$\mathop {\lim }\limits_{\Delta t \to 0} \frac{{{F_i}(t + \Delta t,x)}}{{\Delta t}}$$
it can be readily verified that (\ref{generating}) simplifies to
\begin{align}\label{derives}
\nonumber  \frac{{\partial {F_i}(x,t)}}{{\partial t}} & = \left[ {N - (i - 1)} \right]\cdot  (\lambda)\cdot {F_{i - 1}}(t,x)\\
\nonumber & + [i + 1]\cdot(\mu)\cdot {F_{i + 1}}(t,x)\\
\nonumber & - \left[ {(N - i)\lambda  + i\mu } \right]\cdot {F_i}(t,x)\\
 & - (i - C_t)\cdot(\mu)\cdot \frac{{\partial F_i(t,x)}}{{\partial x}}\ ,
\end{align}
where we have defined ${F_{ - 1}}(t,x)= {F_{N + 1}}(t,x) =0$. By recalling that the design is intended gor a long-term steady-state operation we have ${{\partial {F_i}(x,t)} \mathord{\left/
 {\vphantom {{\partial {F_i}(x,t)} {\partial t = 0}}} \right.
 \kern-\nulldelimiterspace} {\partial t = 0}}$. Hence, (\ref{derives}) can be rewritten as
\begin{align}\label{recursion}
\nonumber (i - C_t)\cdot(\mu)\cdot \frac{{\partial F_i(t,x)}}{{\partial x}} & = \left[ {N - (i - 1)} \right]\cdot  (\lambda)\cdot {F_{i - 1}}(t,x)\\ \nonumber
\nonumber & + [i + 1]\cdot(\mu)\cdot {F_{i + 1}}(t,x)\\
 & - \left[ {(N - i)\lambda  + i\mu } \right]\cdot {F_i}(t,x)\ .
\end{align}
By concatenating all the equations \eqref{recursion} for all $i\in\{0,\dots,N\}$ we obtain the compact form

\begin{equation}\label{diffEq}
\frac{d{\boldsymbol F}(t,x)}{dx}\cdot D = {\boldsymbol F}(t,x)\cdot M\ .
\end{equation}
\end{proof}The solution of the first order differential equation given in \eqref{diffEq} can be expressed as a sum of exponential terms. The general solution requires computing $(N+1)$ eigenvalues of the matrix $MD^{-1}$ and the general solution can be expressed as~\cite{anick1}:
\begin{equation}\label{diff2}
\boldsymbol{F}(t,x) = \sum_{i = 0}^N {{\alpha}_i}\;\bm{ \phi}_i\;\exp(z_ix)\ ,
\end{equation}
where $z_i$ is the $i^{th}$ eigenvalue of $MD^{-1}$ with the associated eigenvector $\bm{ \phi} _i$, which satisfy ${z_i}\bm{\phi }_iD = \bm{\phi}_iM$. The coefficients $\{\alpha_0,\dots,\alpha_N\}$ are determined by the boundary conditions, e.g., $F_i(t,0)=0$ and $F_i(t,\infty)=1$.

In order to compute the probability distribution in \eqref{diff2}, we need to determine the eigenvalues of $MD^{{-1}}$, the eigenvectors $\{\bm {\phi}_i\}$, and coefficients $\{\alpha_i\}$.
Note that, since $x \geq 0$ and $F_{j}(t,x)$ is upper bounded by $1$, all of the positive eigenvalues and the corresponding $\alpha_i$ must be set to zero, hence this reduces the computational complexity and \eqref{diff2} simplifies to:
\begin{equation}\label{diff4}
\boldsymbol{F}(t,x) = \sum_{i:Re[z_{i}\leq0]} {{\alpha}_i}\;{\bm \phi_i}\;\exp(z_ix)\ .
\end{equation}It can be further observed that since ${z_i}\bm{\phi _i}D = \bm{\phi _i}M$, one of the eigenvalues must be zero. Then by setting $z_{0}=0$, the corresponding eigenvector can be computed from $\bm{\phi}_0 M=\bm{0}$. We also earlier showed  that the steady state probability distribution $\bm{\pi}$ of the $N+1$ state Markov chain can be computed from the same equation, that is $\bm{\pi} M=\bzero$. Since, the eigenvector $\bm{\phi}_0$ is known and one of the eigenvalues is $z_{0}=0$, we can write $\bm{\phi} _0=\bm{\pi}$. Therefore, \eqref{diff4} further simplifies to \cite{schwartz1996}:
\begin{equation}\label{diff5}
\boldsymbol{F}(t,x) = \boldsymbol{\pi} +\sum_{i:Re[z_{i}< 0]} {{\alpha}_i}\;\bm{\phi}_i\;\exp(z_ix)\ .
\end{equation}


\subsection{Single User Storage Capacity ($K=1$, $N=1$)}

For computing the desired $\varepsilon$-outage storage capacity $B(\varepsilon)$ by leveraging the cdf vector found in \eqref{diff5} we start by a simple network with one user ($N=1$). The insights gained can be leveraged to generalize the approach for networks with any arbitrary size $N$. When $N=1$ the infinitesimal generator matrix $M$ defined in \eqref{matrix} is
\begin{equation}
M\;=\;\begin{bmatrix}
  - \lambda     & \lambda \\
\mu & -\mu
\end{bmatrix}\ .
\end{equation}
For finding the expansion of $\boldsymbol{F}(t,x)$ as given in \eqref{diff2} we need to find the eigenvalues of $MD^{-1}$, i.e. $z_{0}$ and $z_{1}$, where $D$ is defined in \eqref{D}. Based on \eqref{D} we find that
\begin{equation}
MD^{-1}\;=\;
\begin{bmatrix}
  \frac{1}{C_t}\cdot \frac{\lambda}{\mu}     & -\frac{1}{1-C_t}\cdot \frac{\lambda}{\mu} \\
-\frac{1}{C_t} & -\frac{1}{1-C_t}
\end{bmatrix}\ .
\end{equation}
Hence, the eigenvalues are
\begin{equation}\label{eigenValue}
z_0=0\quad\mbox{and}\quad z_1=\frac{\chi}{C_t}-\frac{1}{{1 - C_t}},
\end{equation}
where we have defined $\chi\triangleq \frac{\lambda}{\mu}$. It can be readily verified that the eigenvector associated with $z_1$ is $\bm{\phi}_1=[1-C_t\;,\; C_t]$. Therefore, according to \eqref{diff5} we have
\begin{align}\label{diff6}
\boldsymbol{F}(t,x) = \boldsymbol{\pi} +{\alpha}_1\;\bm{\phi}_1\;\exp(z_1x)\ .
\end{align}
Finally, by finding the coefficient $\alpha_1$ we can fully characterize $\boldsymbol{F}(t,x)$. This can be facilitated by leveraging the boundary condition $F_1(t,0)=0$, which yields
\begin{equation}
F_1(t,0)\;= \; \pi_1 \; + \; \alpha_1\;C_t\;=\; 0 \ ,
\end{equation}
where we have that $\pi_1=\frac{\lambda}{\lambda+\mu}$. Therefore
\begin{equation}\label{alphaValue}
\alpha_1\;=\; -\frac{\chi}{C_t(1+\chi)}\ ,
\end{equation}
which, subsequently, fully characterizes both cdfs $F_0(t,x)$ and $F_1(t,x)$ according to
\begin{eqnarray*}
F_{0}(t,x) & = & \pi_{0}+\alpha_1(1-C_t)\exp(z_1x),\\
\mbox{and}\quad F_{1}(t,x) & = & \pi_{1}+\alpha_1 C_t\exp(z_1x)\ .
\end{eqnarray*}
As a result, by recalling the definition of $F_i(t,x)$ in \eqref{eq:cdf}, the probability that the storage level $S_t$ falls below a target level $x$ is given by
\begin{equation}\label{mainResult}
\mathds{P}(S_t \leq x)\; = \;  {F_0}(x) + {F_1}(x) = 1 + \alpha_1 \exp(z_1x)\ .
\end{equation}
Given this closed-form characterization for the distribution of $S_t$, we can now evaluate the probability term
\begin{equation}\label{eq:P}
\mathds{P}(S_t>B)\ ,
\end{equation}
which is the core constraint in the storage sizing problem formalized in \eqref{eq:problem}. Specifically, for any instantaneous realization of $C_t$  denoted by $c$ we have
\begin{align}\label{eq:St}
\nonumber \mathds{P}(S_t>B) & = \int_{C_t}\mathds{P}(S_t>B\;|\; C_t=c)\;f_{C_t}(c)\;dc\\
\nonumber & =-\int_{C_t}\alpha_1\exp(z_1B)\;f_{C_t}(c)\;dc\\
\nonumber & =\int_{C_t}\frac{\chi}{c(1+\chi)}\exp\left( \frac{B\chi}{c}-\frac{B}{1 -c}\right)f_{C_t}(c)\;dc\ .
\end{align}
Therefore, by noting that $z_1=\frac{\chi}{c}-\frac{1}{1-c}$ is negative, the probability term $\mathds{P}(S_t>B)$ becomes strictly decreasing in $B$. Hence, the smallest storage capacity $B$ that satisfies the stochastic guarantee $\mathds{P}(S_t>B)\leq \varepsilon$ has a unique solution corresponding to which this constraint holds with equality. In the simplest settings in which grid capacity $C_t$ is constant $c$ we find
\begin{align}
B(\varepsilon) &= \frac{c(1-c)}{\chi - \chi c-c} \cdot \log\frac{\varepsilon c(1+\chi)}{\chi}\ .
\end{align}
\subsection{Multiuser Storage Capacity ($K=1$, $N>1$)}

In this subsection we provide a closed-form for the probability term $\mathds{P}(S_t \leq x)$ for arbitrary values of $N$, which we denote by $F_{N}(x)$. Computing all $F_N(x)$ terms through computing their constituent terms $F_i(t,x)$, especially as $N$ grows, becomes computationally expensive, and possibly prohibitive as it involves computing the eigenvalues and eigenvectors of $MD^{{-1}}$. By capitalizing on the observation that for large number of users $N\gg 1$, the largest eigenvalues are the main contributors to the probability distribution,~\cite{morrison1989} shows that, the asymptotic expression for $F_N(x)$ is given by
\begin{align}\label{Nusers}
F_{N}(x) & = \frac{1}{2}\sqrt {\frac{u}{{\pi f(\varsigma )(\varsigma  + \lambda (1 - \varsigma ))N}}}\\[-1mm] \nonumber
& \;\;\times \exp(- N\varphi (\varsigma)- g(\varsigma )x)\\[-1mm]\nonumber
& \;\;\times \exp(- 2\sqrt {\left\{ {f(\varsigma )(\varsigma  + \lambda (1 - \varsigma ))Nx} \right\}}),\\[-3mm]\nonumber
\end{align}
where,
{\small
\begin{align*}
f(\varsigma ) & \triangleq \log \left( {\frac{\varsigma }{{\lambda (1 - \varsigma )}}} \right) - 2\frac{{\varsigma (1 + \lambda ) - \lambda }}{{\varsigma  + \lambda (1 - \varsigma )}\ },\\ 
u &  \triangleq  \frac{{\varsigma (1 + \lambda ) - \lambda }}{{\varsigma (1 - \lambda )}}\ ,\\ 
\varphi (\varsigma ) & \triangleq \varsigma \log (\varsigma ) + (1 - \varsigma )\log (1 - \varsigma ) - \varsigma \log (\varsigma ) + \log (1 + \lambda )\ ,\\[-1mm]
g(\varsigma ) & \triangleq z + 0.5\left( {\varsigma  + \lambda (1 - \varsigma )} \right)\frac{{\psi (1 - \varsigma )}}{{f(\varsigma )}}\ ,\\[-1mm]
z & \triangleq (1 - \lambda ) + \frac{{\lambda (1 - 2\varsigma )}}{{(\varsigma  + \lambda (1 - \varsigma ))}}\ ,\\[-1mm]
\mbox{and}\;\psi  & \triangleq \frac{{(2\varsigma  - 1){{(\varsigma (1 + \lambda ) - \lambda )}^3}}}{{\varsigma {{(1 - \varsigma )}^2}{{(\varsigma  + \lambda (1 - \varsigma ))}^3}}}\ .
\end{align*}
}
In this set of equations, time is measured in units of a single average ``On'' time ($1/\mu$). Furthermore, $\kappa$ and $\varsigma$ are defined as the ESS per user ($B/N$) and the grid power allocated per one source, respectively. Furthermore, we denote the variable $\upsilon $ as the power above the mean demand allocated per user as $\upsilon  = \varsigma  - \frac{\lambda }{{1 + \lambda }}$.

\section{Storage Capacity Analysis For Multi-Class Customers ($K>1$, $N>1$)}
In this section, we consider the case where the storage serves more than one customer type. We start our analysis by noting that the continuous-time Markov chain model presented in Fig. \ref{MMPP}, becomes a $K$ dimensional Markov process. Recall that total number of users of each class is represented by $\bm{N}=(N_1,\cdots,N_K)$ and we let $\bm{n}(t)=[n_1(t)\dots n_k(t) ]$ represent the number of sources of type $k$ that are On at time $t$.  Furthermore, $\bm{n}(t)$ represents the state space of the Markov process and similar to single class case given in \eqref{matrix}, the transitions can occur between neighboring states, and the transition matrix  $ \bar{M} = \left\{  {\bar M(i,n)} \right\}$ is given by,
\begin{align}
\bar M(\bm{n},\Delta _k^ + (\bm{n})) &= ({N_k} - {n_k}){\lambda _k},\label{trans1}\\ 
\bar M(\bm{n},\Delta _k^ - (\bm{n})) &= {n_k}{\mu _k},\label{trans2}\\
\bar M(\bm{n},\bm{n}') &= 0, \mbox{otherwise.}\label{trans3}
\end{align}
where
\[\begin{array}{l}
\Delta _k^ + ({n_1},\cdots,{n_k},\cdots,{n_K}) = ({n_1},\cdots,{n_k} + 1,\cdots,{n_K})\\
\Delta _k^ - ({n_1},\cdots,{n_k},\cdots,{n_K}) = ({n_1},\cdots,{n_k} - 1,\cdots,{n_K}).
\end{array}\]
In the single class case, the transitions can only occur between $2$ neighboring states. In the multi-class case, however, the transitions can occur in $2\times k$ different neighbors. For instance, assume that there are two classes, $k=2$, and the Markov chain is in state $(1,1)$ (i.e., one active user from each class). Therefore, there exists four possible transitions $(1,0), (0,1), (2,1)$ and $(2,2)$, and the transition rates depend on the $(\lambda_{k},\mu_{k})$ pairs. To that end, the transition rates given in \eqref{trans1} represent customer arrivals, whereas the transitions rates in \eqref{trans2} account for new customer arrivals, and similar to single class case there can not be any transitions beyond neighboring states. This is given in \eqref{trans3}. 
\subsection{Effective Demand}

The analysis in Section \ref{distribution} provided for the single-class settings is also valid for the multi-class case. However, computing the eigenvalues becomes infeasible when $K>1$. Hence, we follow the decomposition method proposed in \cite{gibbens} to efficiently evaluate the eigenvalues $z_{i}$ and the coefficients $\alpha_{i}$. Our goal in this section is to compute $\omega_{k}$ which is a deterministic quantity acting as a surrogate for the actual aggregate stochastic demand \cite{kelly1991effective}. In other words, assigning $\omega_{k}$ amount of resources to each customer class will satisfy \eqref{eq:problem}.

We define parameter $\xi= 1 - \varepsilon ^{{1/B}}  \in \left[ {0,1} \right]$ to account for the susceptibility of the storage to customer demand pattern. This parameter will be an essential part of $\omega_{k}$. First we will consider large storage units, then we will show that the proposed scheme is also valid for any storage size. Assume that we are able to choose $\bm{N}=(N_1,\cdots,N_K)$ in a way that ensures $\mathds{P}\big(S_t>B\big)\leq \varepsilon$ for small $\varepsilon$ and the eigenvalues are relabeled such that ${z_0} \ge {z_1} \ge \dots \ge 0$, then the following theorem holds.

\begin{thm}\label{mainTheorem}

Let $\mathcal{B}(B,\varepsilon)=\{\bm{N}:\mathds{P}\big(S_t\geq B\big)\leq \varepsilon \}$. For large storage size $B$ and for small $\varepsilon$, we have,
\[\mathop {\lim }\limits_{B = \infty ,\varepsilon  \to 0} \frac{{\log \varepsilon }}{B} \to \zeta  \in \left[ { - \infty ,0} \right]\]Furthermore, let
\begin{equation}
\mathcal{\tilde{B}} = \left\{ {\bm{N}:\sum\limits_k {{\omega _k(\xi)}{N_k} < C} } \right\}, \nonumber 
\end{equation}
and,
\begin{equation}
 \mathcal{\bar{B}}= \left\{ {\bm{N}:\sum\limits_k {{\omega _k(\zeta)}{N_k} \le C} } \right\}, \nonumber
\end{equation}
where effective demand can be computed by \cite{gibbens},
\begin{equation}\label{effDemand}
{\omega _k}(\zeta ) = \frac{{\zeta R_{k} + {\mu _k} + {\lambda _k} - \sqrt {{{(\zeta R_{k} + {\mu _k} - {\lambda _k})}^2} + 4{\lambda _k}{\mu _k}} }}{{2\zeta }} \cdot
\end{equation}
Then, $\mathcal{\tilde{B}} \subseteq \mathcal{B}(B,\varepsilon)  \subseteq  \mathcal{\bar{B}}$.
\begin{proof}
For a given set of users, $\bm{N}$, the following holds.
\[
\frac{{\mathds{P}\big(S_t\geq B\big)}}{{\varepsilon}}= \frac{{\sum\nolimits_k {{\alpha _k}({\mathbf{1}^T}{\bm{\phi}_k}){\exp({{z_k}B})}} }}{\varepsilon },
\]
where $z_{k}$'s are the non-positive eigenvalues of $\bar{M}D^{-1}$, $\bm{\phi}_k $ are the corresponding eigenvectors, and $\mathbf{1}^T=[ {1,1,\cdots,1} ]$. Recall that, the coefficients $\alpha_{k}$  can be obtained by solving the following set of equations.

\[\left\{ \begin{array}{{rr}}
\sum\nolimits_k {\alpha _k}{\phi _k} = 0,&\;\mbox{if}\;{D_k} > 0,\\
{\alpha _k} = 0,&\;\mbox{if}\;{z_k} > 0,\\
{\alpha _0}{1^T}{\phi _0} = 1,&\;\mbox{otherwise.}
\end{array} \right.\]In\cite{kosten},
 it is shown that $z_{0}>z_{k}$, for $\forall k \ge 1$, and $z_{0}$ satisfies $C=\sum\nolimits_k {{\alpha _k}(z_{0})N_{k}}$. Then the following holds,
\begin{equation}\label{proof}
\frac{{{\mathds{P}\big(S_t\geq B\big)}}}{\varepsilon } = {\alpha _0}{\bm{1}^T}{\bm{\phi} _0}{\exp^{({z_0} - \zeta )B}}(1 + o(1)),
\end{equation} as $B\to\infty$. Differentiation shows that, effective demand $\omega_{k}(y)$ decreases as $y$ increases. Thus, for $\bm{N}\in \mathcal{B}$, $z_{0}<\zeta$ and $\frac{{{\mathds{P}\big(S_t\geq B\big)}}}{\varepsilon } \to 0$ as $B\to \infty$, which indicates that $\bm{N} \in \mathcal{B}(B,\varepsilon)$. In a similar manner, when $\bm{N} \notin \mathcal{\bar{ B}}$, then $z_{0}>\zeta$ and $\frac{{{\mathds{P}\big(S_t\geq B\big)}}}{\varepsilon } \to \infty$, and therefore $\bm{N}\notin \mathcal{B}(B,\varepsilon)$. This completes the proof.
\end{proof}
\end{thm}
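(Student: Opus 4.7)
The plan is to extend the eigenvalue expansion of the cdf vector from Section III-B to the multi-class generator $\bar{M}$, and to analyze the tail probability $\mathds{P}(S_t\geq B)$ as a sum of exponential modes. Writing
\[
\mathds{P}(S_t\geq B) \;=\; \sum_{k:\, z_k\leq 0} \alpha_k\,(\mathbf{1}^T\bm{\phi}_k)\,\exp(z_k B),
\]
where $\{z_k\}$ are the non-positive eigenvalues of $\bar{M}D^{-1}$ and $\{\bm{\phi}_k\}$ the associated eigenvectors, I would first argue that for large $B$ the tail is dominated by the largest (closest to zero) non-positive eigenvalue $z_0$, so that
\[
\mathds{P}(S_t\geq B) \;=\; \alpha_0\,(\mathbf{1}^T\bm{\phi}_0)\,\exp(z_0 B)\,(1+o(1)).
\]
Taking logarithms, dividing by $B$, and equating $\mathds{P}(S_t\geq B)$ with $\varepsilon$ at the binding constraint immediately yields the first claim: in the joint limit $B\to\infty$, $\varepsilon\to 0$, the ratio $\log\varepsilon/B$ converges to $z_0$, which defines $\zeta\in[-\infty,0]$.

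Next, I would invoke the decomposition argument of Gibbens and Hunt to characterize $z_0$ in closed form. Exploiting the Kronecker-sum structure $\bar{M} = \bigoplus_k M_k$ inherited from the independence of classes, the spectrum of $\bar{M}D^{-1}$ factorizes across classes, and the dominant non-positive eigenvalue $z_0$ is the unique real root of the balance equation
\[
C \;=\; \sum_k \omega_k(z_0)\,N_k,
\]
with $\omega_k(\cdot)$ as given in \eqref{effDemand}. A direct differentiation shows $\omega_k(y)$ is strictly monotone in $y$, so the condition $z_0\leq \zeta$ is equivalent to $\sum_k \omega_k(\zeta) N_k \leq C$, i.e., $\bm{N}\in\bar{\mathcal{B}}$. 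This gives the outer inclusion $\mathcal{B}(B,\varepsilon)\subseteq \bar{\mathcal{B}}$: contrapositively, if $\bm{N}\notin \bar{\mathcal{B}}$, then $z_0>\zeta$, so $\mathds{P}(S_t\geq B)/\varepsilon \to \infty$ and the reliability constraint is violated.

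For the inner inclusion $\tilde{\mathcal{B}}\subseteq \mathcal{B}(B,\varepsilon)$, I would substitute the sharper surrogate $\xi = 1-\varepsilon^{1/B}$ in place of $\zeta$. The strict inequality $\sum_k \omega_k(\xi) N_k < C$ together with the monotonicity of $\omega_k$ forces $z_0 < \xi$, and a term-by-term Chernoff-style upper bound on the exponential expansion (using that each prefactor $\alpha_k\,\mathbf{1}^T\bm{\phi}_k$ is uniformly bounded) then yields $\mathds{P}(S_t\geq B)<\varepsilon$ at every finite $B$, not only asymptotically.

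The hard part will be establishing the closed-form characterization of the dominant eigenvalue $z_0$ via the effective-demand balance equation. This relies on the Kronecker-sum structure of $\bar{M}$, a spectral decomposition that reduces the multi-class problem to separate two-state problems per class, and a Perron-type uniqueness argument identifying the dominant non-positive root. Once this spectral decomposition is in hand, the monotonicity of $\omega_k$ in its argument and the asymptotic dominance of the $z_0$ mode make both inclusions essentially mechanical, with the only remaining technical care being uniform control of the lower-order exponential modes for the non-asymptotic $\tilde{\mathcal{B}}$ direction.
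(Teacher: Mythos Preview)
Your proposal follows essentially the same approach as the paper: write the tail probability as an eigenvalue expansion, isolate the dominant non-positive eigenvalue $z_0$, identify $z_0$ with the effective-demand balance equation $C=\sum_k \omega_k(z_0)N_k$, and use monotonicity of $\omega_k$ to obtain both inclusions via comparison of $z_0$ with $\zeta$. The paper's proof is in fact terser than yours---it simply cites \cite{kosten} and \cite{gibbens} for the characterization of $z_0$ rather than arguing via the Kronecker-sum structure, and it handles the inner inclusion $\tilde{\mathcal{B}}\subseteq\mathcal{B}(B,\varepsilon)$ purely asymptotically rather than with your finite-$B$ Chernoff-type control---so your extra detail on the spectral decomposition and the non-asymptotic bound is additional, not divergent.
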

The interpretation of Theorem \ref{mainTheorem} is that \eqref{eq:problem} holds if and only if $\sum\limits_k {{\omega _k(\zeta)}{N_k}} \le C$.
Furthermore, the following remarks hold.
\begin{remark}
The corresponding effective demand for $\zeta=0$ is $\omega_{k}(\zeta)=\frac{\lambda_{k}R_{k}}{\lambda_{k}+\mu_{k}}$, which is the mean customer demand for each class. Another interpretation of this result is that, before an outage event occurs at the storage, each customer turns ``On'' and ``Off'' their appliances many times that the aggregated demand equals to the mean customer demand. This result serves as a lower bound for the effective demand.
\end{remark}
\begin{remark}
At the other boundary, the corresponding effective demand for $\zeta=z$ is its peak demand $R_{k}$. In this case, each user is at ``On'' at all times, hence peak demand must be allocated.
\end{remark}Remarks given above suggest that the system is more susceptible to bustiness (longer ``On'' durations) for larger $\zeta$ values. Also, similar to the multi-class case, the largest contributor to the probability distribution is the largest eigenvalue, i.e., $z_{0}$.

\section{Economies of ESS Deployment}
\begin{table}[t]
  \centering
  \caption{Cost of poor power quality.\cite{reliability}}
    \begin{tabular}{rrr}
    \toprule
    \$/kW & Average & High \\
    \midrule
    Residential  & \$0.10 & \$0.60 \\
    Small C\&I & \$0.42 & \$2.52 \\
    Large C\&I & \$1.42 & \$14.00 \\
    \bottomrule
    \end{tabular}%
  \label{QCost}%
\end{table}%
\begin{table}[t]
  \centering
  \caption{Average outage cost \cite{PowerOutageRep}}
    \begin{tabular}{rrrrr}
    \toprule
    \$/kW & 15 Min. & 30 Min. & 1 Hour & 2 Hours \\
    \midrule
    Residential  & \$0.05  & \$0.60  & \$2.60  & \$3.95  \\
    Small C\&I & \$8.65  & \$16.01  & \$23.37  & \$48.91  \\
    Large C\&I & \$4.79  & \$7.46  & \$10.12  & \$17.96  \\
    \bottomrule
    \end{tabular}%
  \label{Ocost}%
\end{table}%
\begin{table}[t]
  \centering
  \caption{Average Time Of Use (TOU) rates in the United States 
  \cite{EPRI}}
    \begin{tabular}{rrrrr}
    \toprule
          & \multicolumn{2}{c}{Summer} & \multicolumn{2}{c}{Winter} \\
    \midrule
    \$/kWh & Peak  & Off-Peak & Peak  & Off-Peak \\
    \midrule
    Residential  & \$0.25  & \$0.06  & \$0.13  & \$0.06  \\
    Small C\&I & \$0.18  & \$0.05  & \$0.12  & \$0.05  \\
    Large C\&I & \$0.06  & \$0.04  & \$0.05  & \$0.04  \\
    \bottomrule
    \end{tabular}%
  \label{TOURates}%
\end{table}%
\begin{table}[t]
  \centering
  \caption{Demand Charges}
    \begin{tabular}{rrrrr}
    \toprule
          & \multicolumn{2}{c}{Summer} & \multicolumn{2}{c}{Winter} \\
    \midrule
    \$/kW-Month & Peak  & Off-Peak & Peak  & Off-Peak \\
    \midrule
    Residential  & \$0.00  & \$0.00  & \$0.00  & \$0.00  \\
    Small C\&I & \$15.00  & \$15.00  & \$8.00  & \$8.00  \\
    Large C\&I & \$12.00  & \$12.00  & \$10.00  & \$10.00  \\
    \bottomrule
    \end{tabular}%
  \label{DCost}%
\end{table}%

The proposed framework targets enhancing energy management via deploying for end users and  the monetary associated benefits of ESS deployment \cite{EPRI}.
\begin{enumerate}
\item \emph{Improved power quality} refers to voltage sags and outages experienced by customers, which in many cases remains unnoticed. However if occur for sufficiently large durations, they damage customers' appliances. The costs of momentary outages, according to a survey study conducted in \cite{reliability}, are summarized in Table \ref{QCost}. 
\item \emph{Improved power reliability}\label{ben1} refers to the usage of storage units during outages and blackouts. In order to quantify the cost of power reliability, we adopt the measure used in\cite{PowerOutageRep}, i.e., the cost per kW shown in Table \ref{Ocost}. Obviously the impact of the outages depends on the duration and the frequency of the events. We adopt the statistics conducted by \cite{EPRI} and assume that the consumer average interruption duration in one year is $88$ minutes. 
\item \emph{Reduced Time of Use (TOU) charges} include the savings occurred by eliminating the use of peak hour electricity, and using storage units. ``Time Of Use'' tariffs may vary in different territories and for different seasons. In Table \ref{TOURates}, we present the average TOU tariffs in the U.S.
\item \emph{Demand charges} is a significant portion of the commercial and industrial customers' bill. Some utilities also apply these charges to residential customers. It is usually computed by the amount and the duration of the peak usage \cite{oudalov2007sizing}. Since it is not easy to compute these charges without knowing the load profile, we did not consider this in our calculations. An overview of demand charges is presented in Table \ref{DCost}.

\end{enumerate}
It is noteworthy that our goal in discussing this example scenario is to show how our framework can make the ESS deployment economically viable. Using ESSs is mostly application-dependent and there can more benefits if storage is coupled with distributed generation options. Additional benefits will include profits by energy trading and reduced electricity bills. Also, the cost of carbon emission decreases (due to the elimination of peaking generators). The exact amount of savings depends on the specific policies and regulations. For commercial and industrial customers, there will be additional benefits due to demand charges. 
\begin{figure*}[t]
        \centering
                \begin{subfigure}[b]{0.32\textwidth}
                \centering
                                \includegraphics[width=\columnwidth]{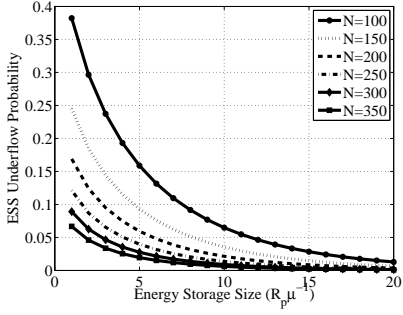}
 \caption{Community storage sizing for different user population.}\label{NvsESS}
                      \end{subfigure}
 \;
        \begin{subfigure}[b]{0.32\textwidth}
                \centering
                 \includegraphics[width=\columnwidth]{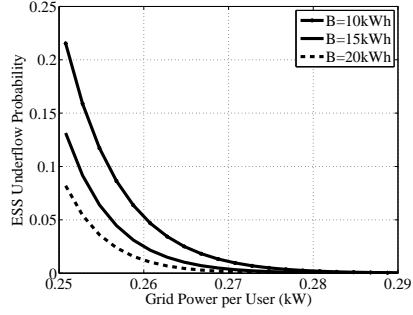}
 \caption{Community storage sizing for varying grid power, $N=200$ appliances (users). }\label{capacityOverFlow}
       \end{subfigure}%
        \;
        \begin{subfigure}[b]{0.32\textwidth}
                \centering
                \includegraphics[width=\columnwidth]{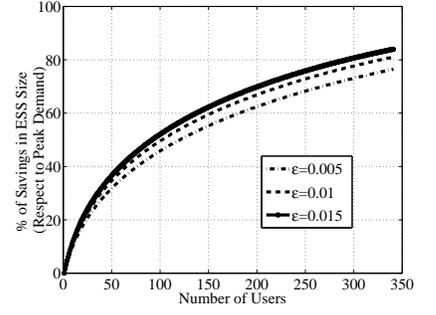}
 \caption{Savings in ESS Size for varying user population}\label{Savings1}
                     \end{subfigure}
        \caption{Evaluation of single-class case.}\label{econ}
\end{figure*}

\section{Numerical Examples}\label{examples}
\subsection{Single Class Customers}
In this section, we provide numerical examples to explain the system dynamics and show how the proposed framework can be used in typical peak shaving applications. In the first group of setting, uniform customer demand is considered ($K=1$). We use the aforementioned normalized values (unit time is measured in $\mu^{-1}$ and unit demand is measured in peak demand - $R_p$). We start by exploring the relationships between the number of users ($N$), ESS size (in $R_p \mu^{-1}$ units) and the corresponding underflow probability for a given system capacity, which remains constant over time, $C$.  Charge request rate per single user $\lambda$ is set to $0.3$, and the mean capacity above the mean demand per user is set to $\upsilon=0.035$ (assumptions are valid for all single class case studies). Therefore, the total system capacity becomes $C=0.2658N$ units. In Fig.~\ref{NvsESS}, sizing problem is evaluated for user population $N$ ranging from $100$ to $350$. Considering the fact that, one household can employ $10-12$ appliances, this interval is chosen to represent a multi-dwelling building or a typical building on a university campus. These findings can be used in the following ways. First, system operators can provision the storage units and provide statistical guarantees (underflow probability) to their customers for a given user population $N$. For example, for a large-scale electric vehicle charging lot (e.g., located in shopping mall or airports~\cite{sgc13}) with $N=350$ charging slots in order to accommodate $99.95\%$ of the customer demand the ESS size should be selected as $B=7\times R_{p}\times\mu^{-1}=7\times6.6\times 2 = 92.4$ kWh (considering level-II chargers and $30$ minutes as the unit time). It is noteworthy that the required storage size decreases as the user population increases. 

Next, we consider the case where the system operator employs an already acquired ESS of sizes $B=10$kWh, $15$kWh, or $20$kWh. In this case, the critical step is to calculate the minimum amount of power to draw from the grid so that performance guarantees can be achieved. This case is evaluated in in Fig.~\ref{capacityOverFlow} for $N=200$ appliances. For instance, suppose ESS size $B=10$kWh is already acquired and the goal is to meet $95$\% of the demand, and then system operator should draw $0.26\times200=52$kW from the grid.
The primary motivation for the employment of the ESS is to reduce the stress on the grid and improve the utilization of power system components in a cost effective manner. Hence, our last evaluation is on computing the amount of savings in ESS size with respect to current common sizing practice, allocating peak demand. The results are depicted in Fig. \ref{Savings1} show that instead of sizing the ESS to meet the entire customer demand, just by rejecting a few percentage of customers, considerable savings in the storage size can be achieved. 
\begin{figure*}[t]
        \centering
                \begin{subfigure}[b]{0.32\textwidth}
                \centering
                                \includegraphics[width=\columnwidth]{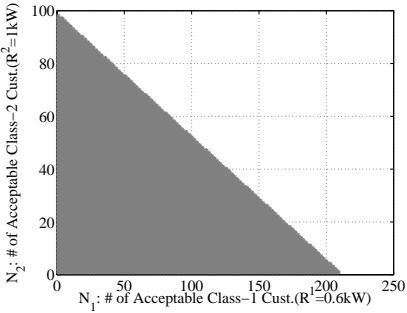}
 \caption{Admission Region for $K=2$.}\label{MRes1}
                      \end{subfigure}
 \;
        \begin{subfigure}[b]{0.32\textwidth}
                \centering
                 \includegraphics[width=\columnwidth]{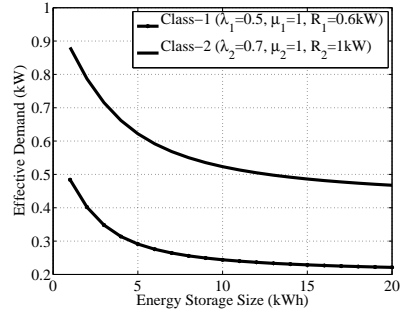}
 \caption{\emph{Effective Demand} Calculation for $K=2$.}\label{MRes2}        \end{subfigure}%
        \;
        \begin{subfigure}[b]{0.32\textwidth}
                \centering
                \includegraphics[width=\columnwidth]{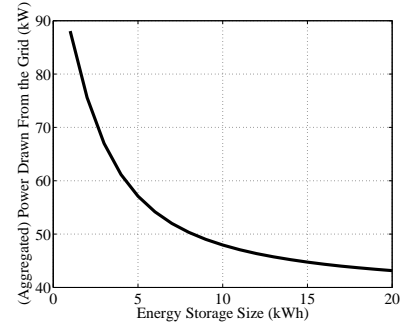}
 \caption{ESS Sizing for $K=2$.}\label{MRes3}
                     \end{subfigure}
        \caption{Evaluation of multi-class case.}\label{econ}
\end{figure*}

\subsection{Multi-Class Customers}
Next, we consider the case where customers can request different demand levels, hence different customer classes. In this subsection, our main objective is to compute the ${\omega _k}$ (\emph{effective demand}) parameter for each class such that ${\omega _k}$ replaces $R_{k}$ and the overflow probability targets are met. We start by discussing a toy example. Suppose that users can generate four different levels of demands, with the following parameters $\bm{\lambda}=\{0.3, 0.5, 0.7, 0.9\}$ , $\bm{\mu}=\{1, 1, 1, 1\}$, and $\bm{R}^{k}={0.2, 0.4, 0.6, 0.8}$ (in kW). Then for the target outage $\varepsilon=10^{-4}$,  and ESS size $B=10$kWh, the corresponding \emph{effective demand} becomes $\bm{\omega}=\{0.0515,\; 0.1567,\; 0.2958,\; 0.4550\}$. Alternatively, the system operator can find the minimum storage size with respect to the available grid resources and target performance metric. For the simplicity of presentation, the remaining numerical results consider two classes of customers with the following parameters $\lambda_1=0.5$, $\mu_1=1$, $R_{1}=0.5$kW and $\lambda_2=0.7$, $\mu_2=1$, $R_{2}=1$kW. Our first evaluation shows the admission set for $C=50$kW and $B=10$kWh, for a target overflow probability of $\varepsilon=0.001$. Results presented in Fig. \ref{MRes1} shows that station operator can choose from any set of customer numbers from the shaded region. Obviously, since the demand for Class $1$ customers is less than the other class, the system operator can accept more customers from Class $1$. As a second evaluation, we calculate the \emph{Effective Demand} for varying storage size. This time the target underflow probability is set as $\varepsilon=0.0005$ and the customer population is chosen as $N_{1}=100$ and $N_{2}=45$. The results depicted in Fig. \ref{MRes2} shows that instead of provisioning the system according to peak demand ($R_{1}=0.6kW$, $R_{2}=1$kW), the use of \emph{effective demands} (e.g., for $B=10$kWh, $\omega_{1}=0.25$ and $\omega_{2}=0.5$) reduces the provisioning of resources tremendously. Our final results are on the ESS sizing for multi classes. For the same set of parameters, we evaluate the storage size with respect to power drawn from the grid. Similar to the single class case, this result can be used to size the storage unit for a given grid power, or it can be used to compute the required grid resources for a given storage size.

\begin{figure*}[t]
        \centering
                \begin{subfigure}[b]{0.32\textwidth}
                \centering
                                \includegraphics[width=\columnwidth]{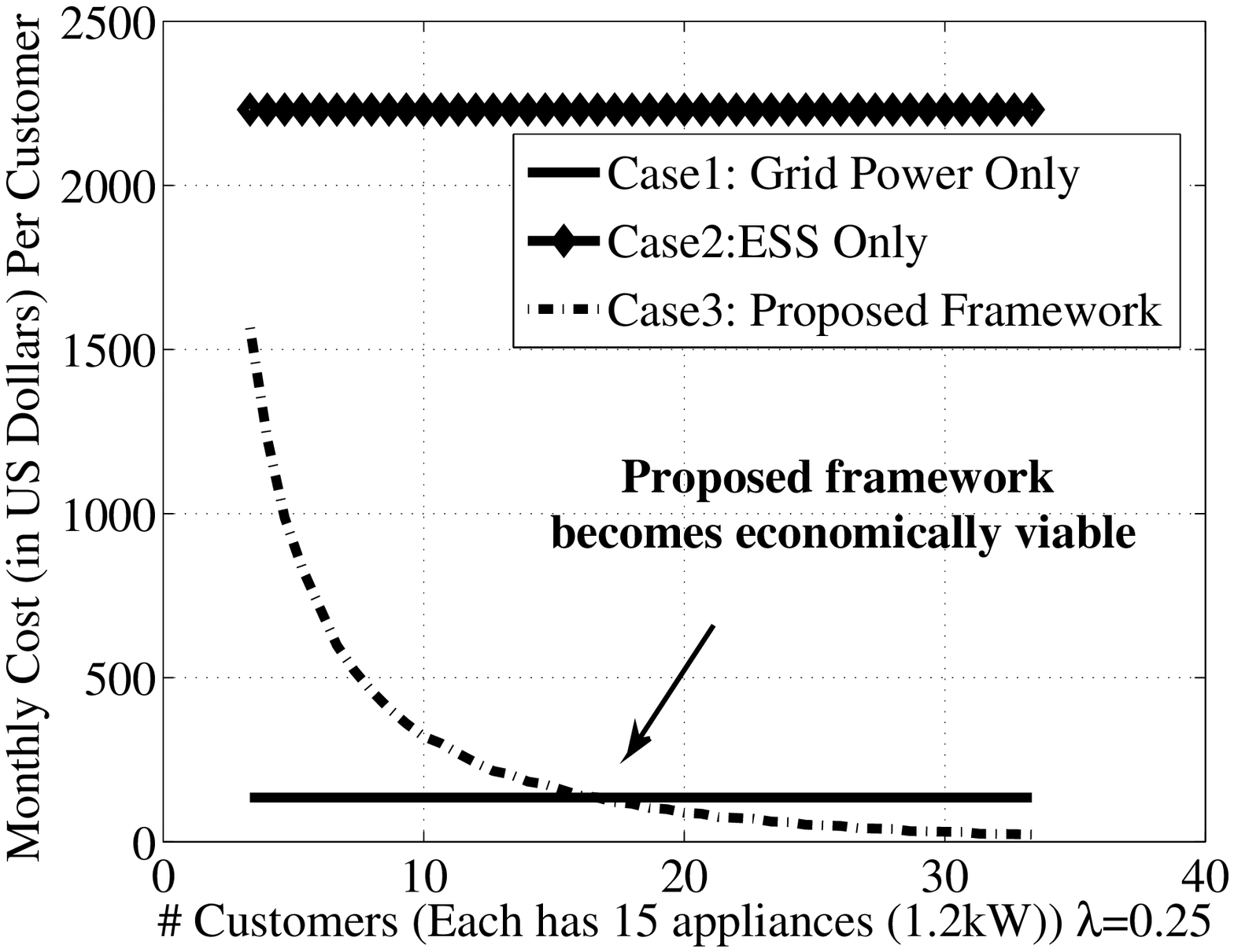}
 \caption{ESS becomes viable when more than $16$ customers share the cost for $\lambda=0.25$.}\label{econ1}
                     \end{subfigure}
 \;
        \begin{subfigure}[b]{0.32\textwidth}
                \centering
                 \includegraphics[width=\columnwidth]{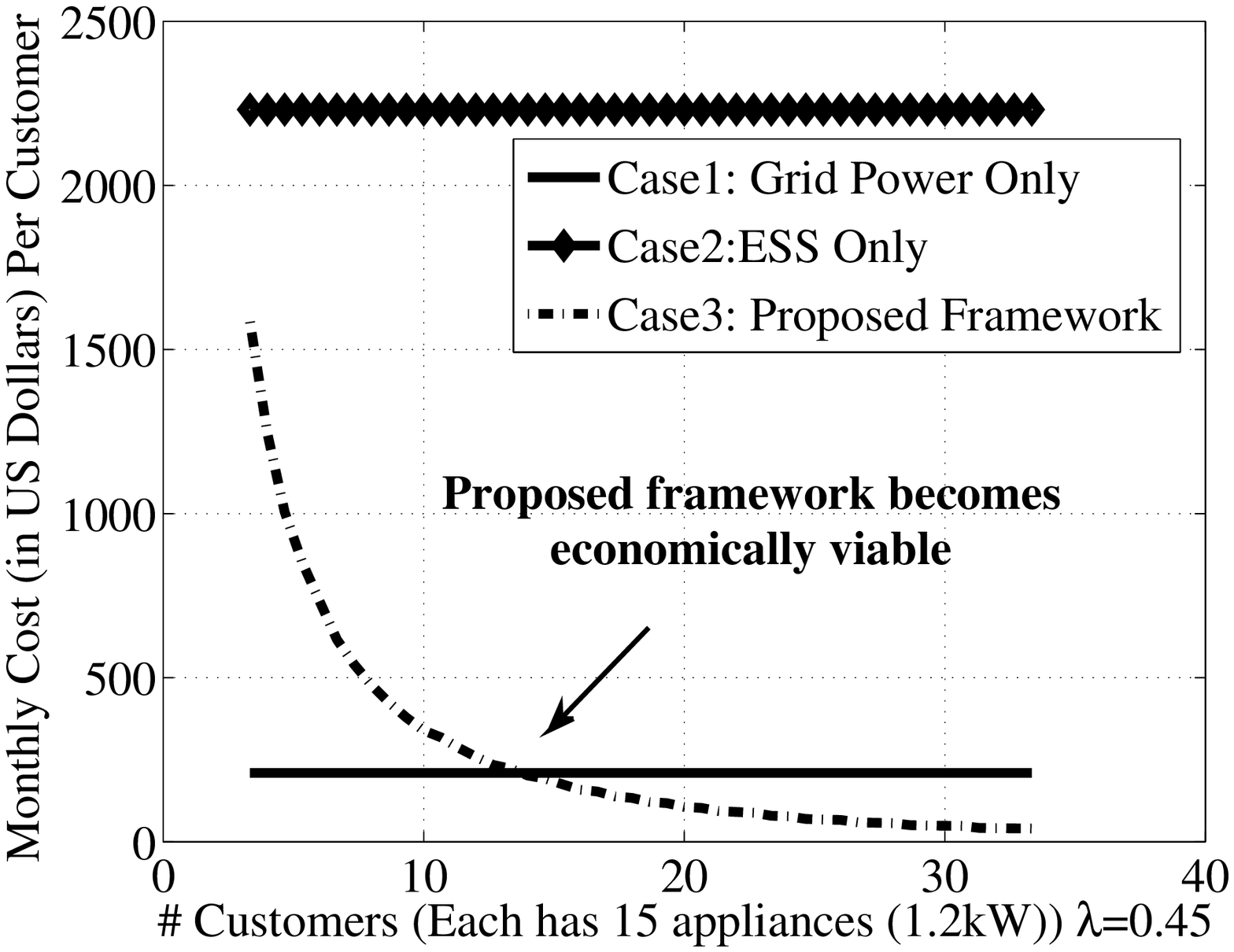}
 \subcaption{ESS becomes viable when more than $11$ customers share the cost for $\lambda=0.45$.} \label{econ2}
        \end{subfigure}%
        \;
        \begin{subfigure}[b]{0.32\textwidth}
                \centering
                \includegraphics[width=\columnwidth]{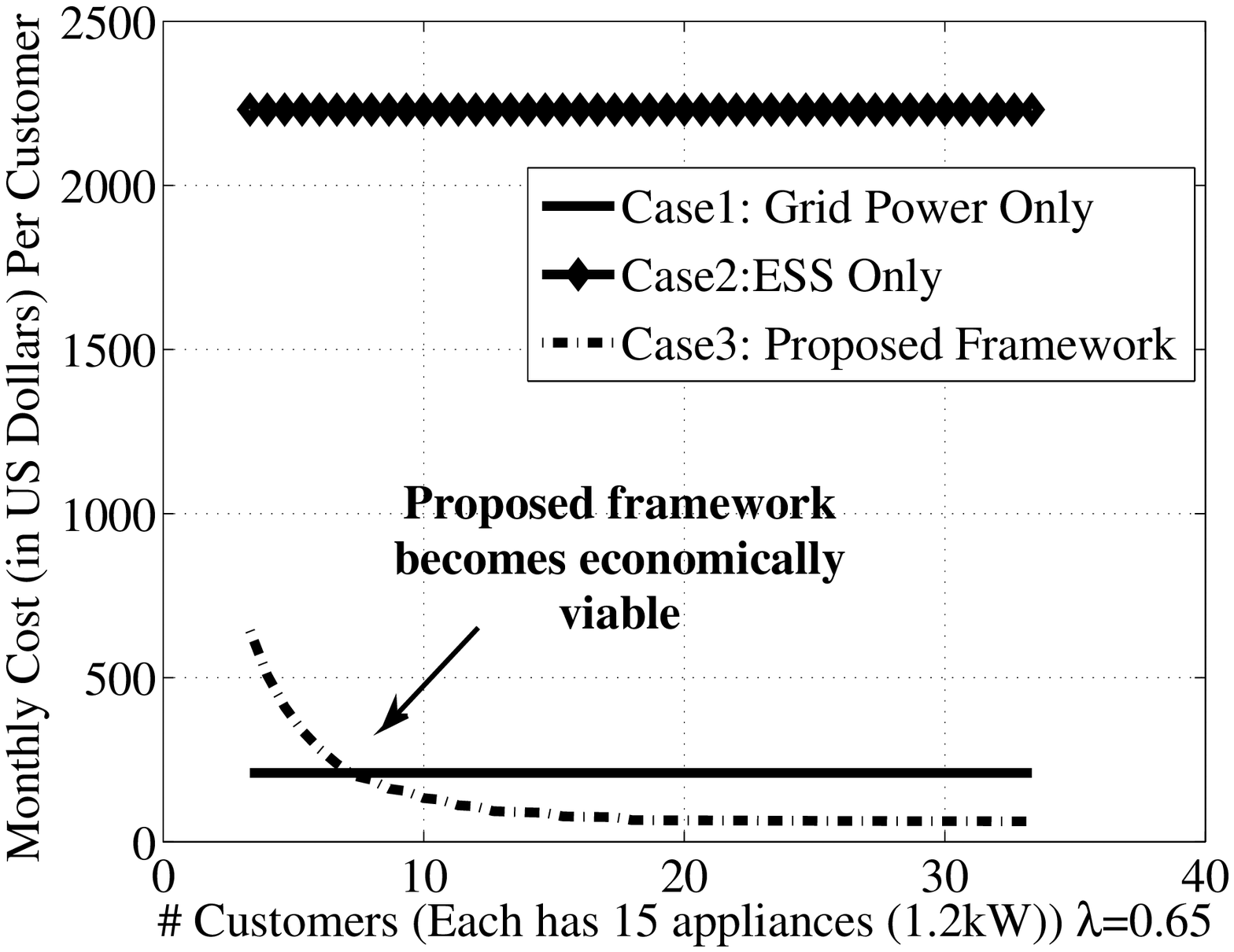}
 \subcaption{ESS becomes viable when more than $7$ customers share the cost for $\lambda=0.65$.}\label{econ3}
                     \end{subfigure}
        \caption{Economical evaluation and cost analysis of the proposed framework in US Dollars (\$) per user per one month.}\label{econ}
        \vspace{-8 pt}
\end{figure*}

\subsection{ESS Economic Analysis}
In this subsection, we provide several numerical examples with actual real-world scenarios to show how the proposed framework reduces the cost of the customers. In our evaluations, we choose the average cost per residential customer. We assume that peak appliance demand is set to $R_{1}=1.2$kW, each user employs $15$ appliances, and there are $250$ days of peak usage in one year.  For simplicity, the peak hour duration is assumed to 1 hour. All the cost/benefit calculations are normalized to peak hour demand. We consider three cases and compare the cost of system operation for different customer population and the frequency of appliance usage.
\begin{itemize}
\item \emph{Case 1 - Grid Power Only:} In this setting, all of the peak hour demand is met by the power demand. This case reflects the current state of affairs. The cost includes the cost of peak charging (\$$0.25$kWh) and all of the listed benefits except the demand charges. Obviously, since there is no storage unit, users suffer from power quality and reliability, and high peak-hour charges.
\item \emph{Case 2 - ESS Only:} In this case, we consider the case where all of the peak hour demand is met by the storage unit. The cost includes the cost of off-peak hour charging and the storage cost. In this case, all of the listed benefits except the demand charges contribute to reducing the total cost of the system.
\item \emph{Case 3 - Proposed Framework:} This is the case where we compute the storage size based on the mathematical proposed framework. We set the probability target as $\varepsilon=0.001$. By noting that, there can be more than one $(C,B)$ combinations that satisfy the overflow targets for the evaluations, we assume that grid power is $20\%$ more than the average customer demand.
\end{itemize}
According to Electric Power Research Institute \cite{EPRI}, the greatest cost of such of energy management and Time-Of-Use pricing applications stems from the acquisition and the Operation \& Management costs of the storage units\footnote{Another highly relevant report is presented in \cite{sandiaReport}, however we consider \cite{EPRI} as it is more recent.}. 
Generally, the lifetime of a storage unit is assumed to be $15$ years with $10$\% discount rate\footnote{This is used to compute the Net Present Value of the ESS. See \cite{EPRI} for further details.}. In this paper, we assume that the unit cost of the storage unit per year is \$$1500$, which is the typical cost for a Li-ion battery.

We compute the aggregate cost of each case per user per month for three different demand frequencies  $\lambda=0.25$, $\lambda=0.45$, and $\lambda=0.65$. The results depicted in Fig. \ref{econ} have the following interpretations. Since the cost of ESS technologies is high, the stand-alone ESS setting (case 2) is extremely costly and not useful practically, which is in line with the conclusion of EPRI \cite{EPRI} and SANDIA \cite{sandiaReport} studies. On the other hand, users can enjoy the aforementioned benefits by sharing the storage (e.g., in a neighborhood or on a campus etc.), as more users share the same resource pool, the cost per user decreases. Obviously, the population of users who share the same ESS depends on the physical deployments. For instance, $100$ apartments in highly dense regions can be fed by the same storage, while this may not be possible in a wide area due to losses. Further, as the storage usage increases (higher $\lambda$) the economic benefits improve faster. These evaluations show that, it may be practical to share the same storage unit so that the total cost of ownership reduces, and customers enjoy a variety of benefits. One final note is that the same analysis can be applied to the multi-class case by computing the \emph{effective demands} and the corresponding storage size. However, our main goal is to conduct the economic analysis per user based. 
 \section{Conclusion}
  In this paper, we have developed a stochastic analytical framework to provision a sharing-based energy storage units, which are expected to be employed in peak hour energy management systems for residential customers. The analysis establishes the interplay among dynamic grid capacity, the number of consumers, different appliances types, and the guarantee levels for avoiding outage events. We have provided a detailed economical analysis and have shown that ESS at residential level is economically beneficial if employed in a sharing-based architecture.

\section*{Acknowledgment}
This publication was made possible by NPRP grant \# 6-149-2-058 from the Qatar National Research Fund (a member of Qatar Foundation). The statements made herein are solely the responsibility of the authors.

\ifCLASSOPTIONcaptionsoff
  \newpage
\fi

\bibliographystyle{IEEEtran}
\bibliography{energycon}

\end{document}